\chardef\bslash=`\\ 
\def\verbatim{\interlinepenalty\@M \@verbatim
  \leftskip\@totalleftmargin\advance\leftskip2pc
  \frenchspacing\@vobeyspaces \@xverbatim}
\newcounter{rmnum}
\newtheorem{thm}{Theorem}[section]
\newtheorem{cor}[thm]{Corollary}
\newtheorem{lem}[thm]{Lemma}
\newtheorem{pro}[thm]{Proposition}
\theoremstyle{definition}
\theoremstyle{remark}
\newtheorem{ex}{Example}[section]
\numberwithin{equation}{section}
\begin{document}


\title[Which compacta are noncommutative AR{\sm s}?]
{Which compacta are noncommutative AR{\small s}?}
\author{A.~Chigogidze}
\address{Department of Mathematics and Statistics,
The University of North Carolina at Greensboro,
Greensboro, NC, 27402, USA}
\email{chigogidze@uncg.edu}
\author{A.~N.~Dranishnikov}
\address{Department of Mathematics,
University of Florida,
444 Little Hall, Gaines\-ville, FL 32611-8105, USA}
\email{dranish@math.ufl.edu}
\thanks{The second author was partially supported by NSF research grant 
DMS-0604494}

\keywords{projective $C^*$-algebra, absolute retract, dendrit}
\subjclass{Primary: 46M10; Secondary: 46B25}


\begin{abstract}{We give a short answer to the question in the
title: {\em dendrits}. Precisely we show that the $C^{\ast}$-algebra $C(X)$ of all
complex-valued continuous functions on a compactum $X$ is projective
in the category ${\mathcal C}^{1}$ of all (not necessarily
commutative) unital $C^{\ast}$-algebras if and only if $X$ is an
absolute retract of dimension $\dim X \leq 1$ or, equivalently, that
$X$ is a dendrit.}
\end{abstract}

\maketitle \markboth{A.~Chigogidze, A.~N.~Dranishnikov}{Which
compacta are noncommutative AR{s}?}

\section{Introduction}\label{S:intro}
We recall that a compact space $X$ is an {\it absolute retract}
(AR) if for every injective continuous map $j:A\to Y$ and
every continuous map $f \colon A\to X$ there exists a continuous extension,
i.e., a map $\tilde{f} \colon Y\to X$ such that $\tilde{f}\circ j=f$.
\bigskip

\[
        \xymatrix{
          & Y \ar@{-->}_{\tilde{f}}[dl]\\
          X   & A  \ar_{j}[u] \ar^{f}[l]\\
        }
      \]

\bigskip

In the dual language of the $C^*$-algebras of continuous
complex-valued functions this means projectiveness of $C(X)$ in the
category of commutative unital $C^*$-algabras. Namely, for any
epimorphism of commutative $C^*$-algebras $p \colon B\to A$ and any
*-homomorphism $f \colon C(X)\to A$, there is a lift $\tilde{f} \colon C(X)\to B$,
$p \circ \tilde{f}=f$.
\bigskip

\[
        \xymatrix{
          & B \ar^{p}[d]\\
          C(X)  \ar_{f}[r] \ar@{-->}^{\tilde{f}}[ur] & A   \\
        }
      \]

\bigskip

A compact space $X$ is a {\it noncommutative AR} if $C(X)$ is
a projective object in the category of all unitary $C^*$-algebras.
Clearly, a noncommutative AR is an absolute retract in ordinary
sense.

Generally, let ${\mathcal M}$ be a subcategory of the category of
all $C^{\ast}$-algebras which is closed under quotients. We use
${\mathcal C}$ to denote the category of all $C^{\ast}$-algebras and
$\ast$-homomorphisms and ${\mathcal C}^{1}$ to denote the
subcategory of unital $C^{\ast}$-algebras and unital
$\ast$-homomorphisms. Let also ${\mathcal A}{\mathcal M}$ denote the
full subcategory of ${\mathcal M}$ consisting of abelian
$C^{\ast}$-algebras. Then a $C^{\ast}$-algebra $P \in {\mathcal M}$
is said to be projective in ${\mathcal M}$ if for any $B \in
{\mathcal M}$, ideal $J \subseteq B$ and morphism $f \colon P \to
A/J$, there exists a morphism $\tilde{f} \colon P \to B$ such that $f =
\tilde{f}\circ \pi$, where $\pi \colon B \to B/J$ is a quotient
morphism. Here is the corresponding diagram

\bigskip

\[
        \xymatrix{
          & B \ar^{\pi}[d]\\
          P  \ar^{f}[r] \ar@{-->}^{\tilde{f}}[ur]& B/J   \\
        }
      \]

\bigskip

\begin{ex}\label{E:main}
The following  observations are well known::
\begin{itemize}
\item[(a)]
${\mathbb C}$ is projective in ${\mathcal C}^{1}$ but not in ${\mathcal C}$;
\item[(b)]
$C([0,1])$ is projective in ${\mathcal C}^{1}$;
\item[(c)]
$C(X)$ is projective in ${\mathcal A}{\mathcal C}^{1}$ if and only if $X$ is a
compact absolute retract;
\item[(d)]
$C\left( [0,1]]^{2}\right)$ is not projective in ${\mathcal C}^{1}$.
\item[(e)]
$C_{0}((0,1])$ is projective in ${\mathcal C}$.
\end{itemize}
\end{ex}

It is important to outline a proof of (d). Let $u$ be the unilateral
shift on the separable Hilbert space $\ell_{2}({\mathbb N})$ and
let $C^{\ast} (u)$ be the corresponding Toeplitz alebra, i.e. the
$C^{\ast}$-subalgebra of ${\mathbb B}(\ell_{2}({\mathbb N}))$
generated by $u$. It is known \cite{cob} that there is a short exact sequence

\[ 0 \longrightarrow {\mathbb K}(\ell_{2}({\mathbb N})) \hookrightarrow C^{\ast}(u)
\stackrel{\pi}{\longrightarrow} C(S^{1}) \longrightarrow 0 \]

\noindent The real and imaginary parts of $\pi (u)$ (commuting
self-adjoint contraction in $C(S^{1})$) determine a
$\ast$-homomorphism $f \colon C([0,1]^{2}) \to C(S^{1})$ which
cannot be lifted to $C^{\ast}(u)$.

We note that first the notion of noncommutative ANR was introduced
by Blackadar \cite{bla} which became known under the name of
semiprojective (commutative) $C^*$-algebras \cite{effros},
\cite{loring}. In \cite{loring} it is shown that every finite graph
is a noncommutative ANR. Using his technique it is easy to show that
every finite tree is a noncommutative AR.

\section{Projectivity and liftable relations}

\begin{lem}\label{L:fsum}
Suppose that a metrizable compactum $Y$ can be represented as the
union $Y = X_{1} \cup X_{2}$ of its connected closed subspaces. If
$\left| X_{1} \cap X_{2}\right| =1$ and $C(X_{k})$ is projective in
${\mathcal C}^{1}$ for each $k =1,2$, then $C(Y)$ is projective in
${\mathcal C}^{1}$.
\end{lem}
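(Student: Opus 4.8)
The plan is to reduce the single lifting problem for $C(Y)$ to the two lifting problems for $C(X_1)$ and $C(X_2)$, which are solvable by hypothesis, and then to reconcile the two partial lifts. Write $\{x_0\}=X_1\cap X_2$, let $\varepsilon_k\colon C(X_k)\to{\mathbb C}$ be evaluation at $x_0$, and put $I_k=\ker\varepsilon_k$. The first step is to record a universal property of $C(Y)$ for morphisms \emph{out} of it. Since a function on $Y$ is a pair of functions on $X_1$ and $X_2$ agreeing at $x_0$, one has a pullback $C(Y)\cong C(X_1)\times_{\mathbb C}C(X_2)$, and, viewing $I_1,I_2$ inside $C(Y)$ (extended by $0$ across $x_0$), one has $I_1 I_2=0$. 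A short computation then shows that a unital $\ast$-homomorphism $F\colon C(Y)\to D$ is the same datum as a pair of unital $\ast$-homomorphisms $F_k\colon C(X_k)\to D$ subject to the single orthogonality relation $F_1(I_1)\,F_2(I_2)=0$.

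With this translation in hand, suppose $f\colon C(Y)\to B/J$ is given, corresponding to a pair $(f_1,f_2)$ with $f_1(I_1)f_2(I_2)=0$. Using projectivity of $C(X_1)$ in ${\mathcal C}^1$, I would lift $f_1$ to some $\tilde f_1\colon C(X_1)\to B$. It then remains to lift $f_2$ to a $\tilde f_2\colon C(X_2)\to B$ for which the orthogonality relation \emph{survives}, i.e. $\tilde f_1(I_1)\,\tilde f_2(I_2)=0$. This is the whole difficulty: an arbitrary lift of $f_2$ produced by projectivity of $C(X_2)$ will satisfy the relation only modulo $J$.

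To force exact orthogonality I would pass to a corner. Let $A=\overline{\tilde f_1(I_1)}$, form its annihilator $A^{\perp}=\{b\in B: bA=Ab=0\}$, and set $C={\mathbb C}1+A^{\perp}$, a unital $C^{\ast}$-subalgebra of $B$; let $C'={\mathbb C}1+A'^{\perp}$ be the analogous subalgebra of $B/J$ built from $A'=\overline{f_1(I_1)}$. Because $f_1(I_1)f_2(I_2)=0$ and $f_2$ is unital, $f_2$ maps $C(X_2)$ into $C'$. The key point is that $\pi$ carries $C$ onto $C'$: given a self-adjoint element of $B/J$ orthogonal to $f_1(I_1)$, one lifts it arbitrarily to $B$ and compresses by $1-e_n$, where $(e_n)$ is a sequential approximate unit of the \emph{separable} algebra $A$; in the limit one obtains a lift lying in $A^{\perp}$. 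This is precisely where metrizability of $Y$ (equivalently, $\sigma$-unitality of $A$) enters, and I expect it to be the main obstacle. Granting it, $\pi|_C\colon C\to C'$ is a quotient map of unital $C^{\ast}$-algebras, so projectivity of $C(X_2)$ yields a lift $\tilde f_2\colon C(X_2)\to C\subseteq B$ of $f_2$.

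Finally I would check that orthogonality is now automatic. Here $A^{\perp}$ is an ideal of $C$ with $C/A^{\perp}\cong{\mathbb C}$; let $\chi$ and $\chi'$ be the resulting characters of $C$ and $C'$, so that $\chi=\chi'\circ\pi|_C$. Since $f_2(I_2)\subseteq A'^{\perp}=\ker\chi'$, the character $\chi'\circ f_2$ kills $I_2$ and hence equals $\varepsilon_2$; therefore $\chi\circ\tilde f_2=\chi'\circ f_2=\varepsilon_2$ as well, which forces $\tilde f_2(I_2)\subseteq\ker\chi=A^{\perp}$, that is $\tilde f_1(I_1)\tilde f_2(I_2)=0$. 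By the universal property of the first paragraph the pair $(\tilde f_1,\tilde f_2)$ assembles into a unital $\ast$-homomorphism $\tilde f\colon C(Y)\to B$ with $\pi\circ\tilde f=f$, proving that $C(Y)$ is projective in ${\mathcal C}^1$.
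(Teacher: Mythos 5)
Your first paragraph is correct: $C(Y)$ is the pullback $C(X_1)\times_{\mathbb C}C(X_2)$, and a unital $\ast$-homomorphism out of $C(Y)$ is the same thing as a pair $(F_1,F_2)$ with $F_1(I_1)F_2(I_2)=0$. The proof breaks at your ``key point'', the claim that $\pi$ maps $C={\mathbb C}1+A^{\perp}$ onto $C'={\mathbb C}1+A'^{\perp}$. This is false, even with metrizability. Take $X_1=[0,1]$ and $X_2=[-1,0]$ glued at $x_0=0$ (so $Y\cong[-1,1]$), let $B=C([0,1])$, $J=\{b\in B: b(0)=b(1)=0\}$, so $B/J\cong{\mathbb C}^2$ with $\pi(b)=(b(0),b(1))$, and let $f(h)=(h(-1),h(1))$; then $f_1(g)=(g(0),g(1))$ and $f_2(g)=(g(-1),g(0))$, and $f$ does lift (pull back along $t\mapsto 2t-1$), so the Lemma is not contradicted. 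A perfectly legitimate lift of $f_1$ produced by projectivity of $C([0,1])$ is $\tilde f_1=\mathrm{id}\colon C(X_1)=C([0,1])\to B$. For this lift $A=\tilde f_1(I_1)=C_0((0,1])$, whose annihilator in $C([0,1])$ is $\{0\}$ (anything killing all functions vanishing at $0$ vanishes on $(0,1]$, hence everywhere). So $C={\mathbb C}1$, while $C'={\mathbb C}1+({\mathbb C}\oplus 0)={\mathbb C}^2$, and $\pi|_C$ is far from onto. Your approximate-unit repair fails exactly here: with $e_n(t)=\min(nt,1)$ and $b(t)=1-t$ lifting $(1,0)\in A'^{\perp}$, the compressions $(1-e_n)b(1-e_n)$ converge pointwise to a discontinuous function and are not norm-Cauchy; $\sigma$-unitality gives strict convergence in a multiplier algebra, not norm convergence in $B$, and indeed no lift of $(1,0)$ inside $A^{\perp}$ exists.

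The gap is strategic, not merely technical: for the lift $\tilde f_1$ above there is \emph{no} compatible $\tilde f_2$ whatsoever, since $\tilde f_1(I_1)\tilde f_2(I_2)=0$ forces $\tilde f_2(I_2)\subseteq A^{\perp}=\{0\}$, contradicting $\pi\circ\tilde f_2=f_2$ and $f_2(I_2)\neq 0$. Hence any argument that first fixes an arbitrary lift of $f_1$ and only then lifts $f_2$ cannot be repaired; the two lifts must be chosen simultaneously. (Already in the simplest instance --- lifting two orthogonal positive contractions $a,b\in B/J$ --- one cannot lift $a$ first; the standard trick is to lift $a-b$ to a self-adjoint $x\in B$ and set $\hat a=x_+$, $\hat b=x_-$.) This orthogonality difficulty is precisely what the paper outsources to Loring: by \cite[Theorem 10.1.9]{loring}, projectivity of $C(X_k)$ in ${\mathcal C}^1$ is equivalent to projectivity of $C_0(X_k\setminus\{x_0\})$ in the non-unital category ${\mathcal C}$; then $C_0(Y\setminus\{x_0\})=C_0(Y_1)\oplus C_0(Y_2)$ is projective in ${\mathcal C}$ by the direct-sum theorem \cite[Theorem 10.1.11]{loring}, whose proof is exactly such a simultaneous-lifting argument, and one converts back via 10.1.9. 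A self-contained proof of the Lemma would have to reprove that direct-sum theorem, not the corner-surjectivity claim, which is simply false.
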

\begin{proof}
Let $Y_{k} = X_{k} \setminus (X_{1}\cap X_{2})$, $k = 1,2$. Since
$X_{k}$ obviously is the one-point compactification of $Y_{k}$ it
follows (see, for instance, \cite[Theorem 10.1.9]{loring}) that
$C_{0}(Y_{k})$ is projective in ${\mathcal C}$, $k = 1,2$. By
\cite[Theprem 10.1.11]{loring}, $C_{0}(Y_{1} \cup Y_{2}) =
C_{0}(Y_{1}) \oplus C_{0}(Y_{2})$ is also projective in ${\mathcal
C}$. Finally since $Y$ is the one-point compactification of the sum
$Y_{1}\cup Y_{2}$ we conclude, again referring to \cite[Theorem
10.1.9]{loring}, that $C(Y)$ is projective in ${\mathcal C}^{1}$.
\end{proof}

\begin{cor}\label{C:ftree}
Let $X$ be a finite tree. Then $C(X)$ is projective in ${\mathcal
C}^{1}$.
\end{cor}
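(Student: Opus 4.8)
The plan is to induct on the number of edges of the tree $X$, attaching one edge at a time via Lemma \ref{L:fsum}. The base cases come straight from Example \ref{E:main}: a tree with no edges is a single point, so $C(X) = \mathbb{C}$ is projective in $\mathcal{C}^{1}$ by part (a), and a tree consisting of a single edge is homeomorphic to $[0,1]$, so $C(X)$ is projective by part (b).

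For the inductive step, suppose $X$ is a finite tree with $n \geq 2$ edges and that the corollary holds for all trees with fewer edges. Since every finite tree has at least one leaf (a vertex of degree one), I would fix such a leaf $v$ and let $X_{2}$ be the unique closed edge incident to $v$, with $w$ denoting its other endpoint; thus $X_{2} \cong [0,1]$. Let $X_{1}$ be the union of all the remaining closed edges. Then $X = X_{1} \cup X_{2}$, both summands are closed subspaces of the metrizable compactum $X$, and $X_{1} \cap X_{2} = \{w\}$, so $\left| X_{1} \cap X_{2}\right| = 1$, exactly as required by the hypotheses of Lemma \ref{L:fsum}.

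The one point needing justification is that $X_{1}$ is again a finite tree, now with $n-1$ edges: deleting a leaf edge from a tree leaves a connected acyclic graph, since the removed vertex has degree one and hence disconnects nothing. Consequently $X_{1}$ is connected, and by the inductive hypothesis $C(X_{1})$ is projective in $\mathcal{C}^{1}$, while $C(X_{2}) = C([0,1])$ is projective by Example \ref{E:main}(b). Applying Lemma \ref{L:fsum} to the decomposition $X = X_{1} \cup X_{2}$ then gives that $C(X)$ is projective in $\mathcal{C}^{1}$, completing the induction.

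I expect the only genuine content to be this combinatorial observation that a leaf edge can always be split off so that the two pieces meet in a single point while the remainder stays connected; once that is in place, Lemma \ref{L:fsum} does all of the analytic work and the base cases are supplied verbatim by Example \ref{E:main}. I therefore do not anticipate any real obstacle, since the projectivity of $C([0,1])$ is assumed and the gluing lemma handles the inductive passage.
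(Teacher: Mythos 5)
Your proof is correct and is essentially the paper's own argument: the paper's proof is the one-line ``observe that $C([0,1])$ is projective in ${\mathcal C}^{1}$ and repeatedly apply Lemma \ref{L:fsum},'' and your induction on the number of edges, splitting off a leaf edge so the two pieces meet in a single point, is exactly the intended way to carry out that repeated application. The combinatorial details you supply (the leaf edge meets the rest in one point, and the remainder is again a tree) are accurate and fill in what the paper leaves implicit.
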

\begin{proof}
Observe that $C([0,1])$ is projective in ${\mathcal C}^{1}$ and
repeatedly apply Lemma \ref{L:fsum}.
\end{proof}

We recall some definitions from \cite{loring}. Given a relation
$$\mathcal R\subset C^*\langle x_1,\dots, x_n\mid \|x_i\| \le
1\rangle $$ its {\em representation} in a $C^*$-algebra $A$ is an
$n$-tuple of constructions $a_1,\dots a_n\in A$ such that
$\Phi(p)=0$ for all $p\in\mathcal R$ where $$\Phi:C^*\langle
x_1,\dots, x_n\mid \|x_i\|\le 1\rangle \to A$$ with $\Phi(x_i)=a_i$.
If only $\|\Phi(p)\|<\delta$ for all $p$, then it is called a {\em
$\delta$-representation} of $\mathcal R$ in $A$.

Let $(E,\le)$ be finite partially ordered set with the property that
each element has at most one predecessor. We denote by $\mathcal
R(E)$ the following relation set:

\smallskip
{\em $0\le e\le 1$ for $e\in E$;

\smallskip

$(e-1)e'=0$ if $e\le e'$, and

\smallskip
$ee'=0$ if $e$ and $e'$ are incomparable; $e,e'\in E$.}

\smallskip

This set of relations occurs on generators of the algebra $C(T)$ for
a finite tree $T$. Under a tree we mean a connected graph without
loops. By $V(T)$ and by $E(T)$ we denote the set of vertices and the
set of edges respectively. Fixing a root in $T$ gives the order on
$E=E(T)$ by the rule: $e\le e'$ if the shortest path to the root
from $e'$ uses $e$. It also defines the orientation on edges
$e=[v_e^-,v_e^+]$ with $v_e^-$ to be the closest to the root. Denote
by $h_e$ the distance to $v^-_e$ function defined on $e$ and
extended to $T$ by means of the natural collapse of $T\setminus e$
to the end points of $e$.
\begin{pro} The family
$\{h_e\mid e\in E(T)\}$ together with the constants $\mathbb C$
generate the algebra $C(T)$.
\end{pro}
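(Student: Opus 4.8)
The plan is to invoke the complex Stone--Weierstrass theorem. Write $B$ for the $C^{*}$-subalgebra of $C(T)$ generated by $\{h_e \mid e\in E(T)\}$ together with the constants $\mathbb C$. Each $h_e$ is a real-valued continuous function on $T$, hence a self-adjoint element of $C(T)$, and the constants are closed under complex conjugation; thus $B$ is a closed, self-adjoint subalgebra containing the unit. By Stone--Weierstrass it therefore suffices to prove that the family $\{h_e\}$ separates the points of $T$, for then $B=\overline{B}=C(T)$.

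First I would record the precise shape of each $h_e$. Orienting $e=[v_e^-,v_e^+]$ and deleting its interior splits the tree into exactly two closed pieces, the one carrying $v_e^-$ and the one carrying $v_e^+$; by construction $h_e$ vanishes identically on the first piece, equals the constant $\ell_e=h_e(v_e^+)$ on the second, and increases strictly from $0$ to $\ell_e$ along $e$ itself (the edge value and the collapse value agree at each of $v_e^-,v_e^+$, so $h_e$ is continuous). The separation argument then divides into two cases. If at least one of two distinct points, say $x$, lies in the interior of an edge $e$, then $h_e(x)\in(0,\ell_e)$, while for the other point $y$ one has either $y\in e$, where $h_e(y)$ is the distinct distance of $y$ from $v_e^-$, or $y\notin e$, where $h_e(y)\in\{0,\ell_e\}$; in every subcase $h_e(x)\ne h_e(y)$. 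If instead both points are vertices, I would take the first edge $e$ traversed by the unique arc joining them; the two vertices then lie on opposite pieces of $T$ relative to $e$, so $\{h_e(x),h_e(y)\}=\{0,\ell_e\}$, and again $h_e$ separates them.

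I do not anticipate a genuine obstacle: once the reduction to point separation is in place, the remainder is an elementary case check. The only points demanding care are bookkeeping ones, namely confirming continuity of the collapse extension and, in the first case, verifying that an interior value $h_e(x)\in(0,\ell_e)$ is truly distinct from the endpoint values $\{0,\ell_e\}$ taken off the edge, which is exactly what lets a single coordinate $h_e$ isolate an interior point from the rest of the tree. This last observation is also the conceptual heart of the statement: each $h_e$ records position along $e$ while folding everything else onto $e$'s endpoints, so collectively the $h_e$ see enough of $T$ to tell any two points apart.
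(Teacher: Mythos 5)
Your proof is correct, but it runs along a genuinely different route from the paper's. You reduce everything to the complex Stone--Weierstrass theorem: the $h_e$ are real-valued, so the closed unital $\ast$-subalgebra they generate is self-adjoint, and the whole problem collapses to checking that the family $\{h_e\}$ separates points of $T$, which your two-case analysis (interior point of an edge versus two vertices, using the edge crossed by the unique arc) settles cleanly. The paper instead argues by induction on the height of the rooted tree to produce an explicit and \emph{unique} decomposition $f = f(o) + \sum_e \phi_e r_e$, where $r_e \colon T \to e$ is the collapse retraction and $\phi_e \in C_{0}((v_e^-,v_e^+]) \cong C_0((0,1])$, and then invokes the fact that the coordinate function $h(t)=t$ generates $C_0((0,1])$. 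The trade-off: your argument is shorter, softer, and needs no induction, but it yields only the bare generation statement; the paper's argument yields a normal form for elements of $C(T)$ --- exhibiting $C(T)$ as assembled from edge-indexed copies of $C_0((0,1])$ --- which is the structural picture that the paper exploits elsewhere (it parallels the $C_0(Y_1)\oplus C_0(Y_2)$ splitting in Lemma~\ref{L:fsum} and underlies viewing $\{h_e\}$ as a ``standard basis'' tied to the relation set $\mathcal R(E)$). Both proofs are complete; yours simply buys brevity at the cost of that extra structural information.
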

\begin{proof}
Every function  $f\in C(T)$ can be uniquely presented as the sum
$f=f(o)+\sum_ef_e$ with $f_e=\phi_e r_e$ and $\phi_e\in
C_{0}((v_e^-,v_e^+])\cong C_0((0,1])$ where $o\in T$ denotes the
root and $r_e:T\to e$ is the retraction collapsing the complement to
the edge $e$ to its end points. We show this by induction on the
hight of $T$, the maximal length of branches. Certainly it is true
for trees of hight 0, i.e., one point ($=o$). Assume that it holds
true for trees of hight $<k$ and let $T$ be of hight $k$. Then $T$
can be presented as a tree $T'$ of hight $k-1$ with a family of
edges $E'$ attached to vertices of $T'$ with the distance $k-1$ from
the root. By induction assumption $f|_{T'}=f(o)+\sum_{e\in
E(T')}\phi_er_e'$ where $r':T'\to e$ is the retraction. Clearly,
$f-(f(o)+\sum_{e\in E(T')}\phi_er_e)$ is the sum of functions
$\phi_e$ with supports in $e\in E'$. This implies existence of the
presentation. Since each $\phi_e$, $e\in E'$, is uniquely defined,
we obtain the uniqueness.

Each function $\phi_e$ can be "expressed" in terms of $h_e$, since
the function $h(t)=t$ generates $C_0((0,1])$.
\end{proof}

Note that $\{h_e\mid e\in E\}$ satisfies the relations $\mathcal
R(E)$. We will refer to $\{h_e\mid e\in E(T)\}$ as to the {\em
standard basis} of the algebra $C(T)$ for a rooted tree $T$.

A set of relations $\mathcal R$ on a set $G$ is called {\em
liftable} if, for any epimorphism of $C^*$-algebras $\pi:A\to B$ and
a representation $<b_g>_{g\in G}$ in $B$ there is a lifting to a
representation $<a_g>_{g\in G}$ in $A$ also satisfying $\mathcal R$
and such that $\pi(a_g)=b_g$. Then a projectivity of the universal
$C^*$-algebra $C^*(G\mid\mathcal R)$ is equivalent to the
liftability of $\mathcal R$ (see \cite{loring} for more details). In
view of this we can restate the Corollary~\ref{C:ftree} as follows.
\begin{pro}\label{liftable}
For every finite tree $T$ the relation set $\mathcal R(E(T))$,
is liftable.
\end{pro}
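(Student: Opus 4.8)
The plan is to convert the statement into a projectivity claim and then feed it the results already assembled. By the equivalence recalled just before the statement, liftability of $\mathcal R(E(T))$ is precisely projectivity, in the category $\mathcal C$, of the universal $C^*$-algebra $A = C^*(E(T)\mid \mathcal R(E(T)))$. So it suffices to identify $A$ and to show it is projective in $\mathcal C$.

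First I would note that the relations force the generators to commute, so that $A$ is commutative. Indeed, for $e\le e'$ the relation $(e-1)e'=0$ reads $ee'=e'$, and applying $\ast$ (each generator is self-adjoint since $0\le e\le 1$) gives $e'e=e'$, whence $ee'=e'e$; for incomparable $e,e'$ both $ee'$ and $e'e$ vanish. Thus $A=C_0(Y)$ for a locally compact Hausdorff space $Y$, namely its space of nonzero characters. A character is an assignment $e\mapsto t_e\in[0,1]$ compatible with $\mathcal R(E(T))$, and unwinding the relations shows that the set $\{e: t_e\neq 0\}$ must be a downward-closed chain with $t_e=1$ below its top element. This is exactly the data of an oriented path issuing from the root $o$ together with a value in $(0,1]$ on its last edge, i.e. a point of $T\setminus\{o\}$ (the zero character corresponding to $o$ itself). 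Since the standard basis $\{h_e\}$ realizes these generators and, by the generation Proposition above, generates $C_0(T\setminus\{o\})$, matching the Gelfand topology then yields $A\cong C_0(T\setminus\{o\})$.

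It remains to prove that $C_0(T\setminus\{o\})$ is projective in $\mathcal C$. Here I would use that $T$ is the one-point compactification of $T\setminus\{o\}$ (re-adjoining the root). By Corollary~\ref{C:ftree}, $C(T)$ is projective in $\mathcal C^1$, and then \cite[Theorem 10.1.9]{loring} --- exactly the passage between unital projectivity of $C(X)$ and projectivity of $C_0(X\setminus\{\mathrm{pt}\})$ already used in Lemma~\ref{L:fsum} --- gives that $C_0(T\setminus\{o\})$ is projective in $\mathcal C$. Combined with the first paragraph, this shows $\mathcal R(E(T))$ is liftable.

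The main obstacle I anticipate is the middle step: pinning the universal algebra down \emph{exactly}. Commutativity is immediate, but one must check that the relations are complete, so that the character computation returns $T\setminus\{o\}$ on the nose, with the correct topology and nothing extra, rather than merely exhibiting $C_0(T\setminus\{o\})$ as a quotient of $A$. The generation Proposition supplies the surjective half; the injective and topological half is where care is needed, and it is also the only place where the unital-versus-nonunital bookkeeping --- the role of the root $o$ as the point at infinity --- has to be tracked carefully.
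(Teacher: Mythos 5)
Your proof is correct, and it shares the paper's overall strategy---reduce liftability to projectivity of the universal $C^*$-algebra for $\mathcal R(E(T))$ and then fall back on Corollary~\ref{C:ftree}---but it executes the middle step by a genuinely different route. The paper's proof is a one-liner: it cites Lemma 3.2.2 of \cite{loring} to identify $C(T)$ as the universal algebra for $\mathcal R(E(T))$ \emph{in the unital category} $\mathcal C^1$, so that Corollary~\ref{C:ftree} applies immediately. You instead identify the \emph{non-unital} universal algebra as $C_0(T\setminus\{o\})$ by hand (commutativity of the generators, then the character-space computation), and then transfer projectivity from $\mathcal C^1$ to $\mathcal C$ via \cite[Theorem 10.1.9]{loring}, the same one-point-compactification passage used in Lemma~\ref{L:fsum}. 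What your route buys: it is self-contained where the paper leans on Loring's Lemma 3.2.2, and it is arguably more faithful to the definition of liftability actually given in the paper, which quantifies over arbitrary (not necessarily unital) epimorphisms---the paper's unital identification strictly speaking still needs a unitization argument (or Theorem 10.1.9) to cover non-unital surjections, a step it leaves implicit. What the paper's route buys: brevity, and no need to verify the spectrum computation, since Loring's lemma packages exactly the claim that relations of this commutative type have $C(K)$ as universal unital algebra. One small point to make airtight in your write-up: to say the $h_e$ generate $C_0(T\setminus\{o\})$ (rather than $C(T)$) from the generation Proposition, note that each $h_e$ vanishes at the root, so in any decomposition $f=d+\lambda 1$ with $f\in C_0(T\setminus\{o\})$ and $d$ in the algebra generated by the $h_e$, evaluation at $o$ forces $\lambda=0$; and injectivity of the induced surjection onto $C_0(T\setminus\{o\})$ follows since your character computation shows every character factors through it, while characters separate points of a commutative $C^*$-algebra.
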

\begin{proof}
We apply Lemma 3.2.2 of \cite{loring} to get that $C(T)$ is the
universal algebra in $\mathcal C^1$ for the relation set $\mathcal
R(E(T))$.
\end{proof}

We recall \cite{loring} that a finite relation is called {\em
stable} if for every $\epsilon>0$ there is $\delta >0$ such that for
every epimorphism $\pi:A\to B$ and every $\delta$-representation
$(x_1,\dots, x_n)$ of $\mathcal R$ in $A$ such that
$(\pi(x_1),\dots,\pi(x_n))$ is a representation for $\mathcal R$ in
$B$, there is a representation $(y_1,\dots,y_n)$ for $\mathcal R$ in
$A$ such that $\|y_i-x_i\|<\epsilon$ and $\pi(y_i)=\pi(x_i)$.

Since the stability of relations means exactly the semiprojectivity
of the universal algebra and projectivity implies semiprojectivity
we can conclude (see Theorem 14.1.4 \cite{loring}) that the
following holds true:
\begin{pro}\label{stab}
The relations $\mathcal R(E(T))$
are stable for any finite tree $T$.
\end{pro}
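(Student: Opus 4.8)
The plan is to deduce the proposition from the projectivity already established, using the standard dictionary between stability of a finite relation set and semiprojectivity of the associated universal $C^*$-algebra.

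First I would record what the universal algebra is. By Lemma 3.2.2 of \cite{loring}, invoked already in the proof of Proposition~\ref{liftable}, the universal unital $C^*$-algebra $C^*(E(T)\mid \mathcal R(E(T)))$ is precisely $C(T)$, with the standard basis $\{h_e\mid e\in E(T)\}$ playing the role of the universal representation of the relations. Consequently every statement about the relation set $\mathcal R(E(T))$ can be translated into the corresponding statement about the algebra $C(T)$, and conversely.

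Next I would appeal to the equivalence recorded in Theorem 14.1.4 of \cite{loring}: a finite relation set is \emph{stable} exactly when its universal $C^*$-algebra is \emph{semiprojective}. Indeed, stability is by definition the ability to correct a $\delta$-representation to an exact representation lying within $\epsilon$ while respecting a fixed quotient, which is precisely the approximate lifting condition that defines semiprojectivity of the universal algebra. It then remains only to verify that $C(T)$ is semiprojective. Here I would use the general implication that projectivity implies semiprojectivity: a projective algebra lifts $*$-homomorphisms exactly along every quotient $B\to B/J$, which is strictly stronger than the inductive-limit/approximate lifting demanded of a semiprojective algebra. Since $C(T)$ is projective in $\mathcal C^1$ by Corollary~\ref{C:ftree}, it is in particular semiprojective, and unwinding the equivalence above yields stability of $\mathcal R(E(T))$.

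Thus the proposition is essentially a formal corollary: the substantive work has already been carried out in proving that $C(T)$ is projective and in identifying it as the universal algebra for $\mathcal R(E(T))$. I do not anticipate a genuine obstacle; the only point requiring care is confirming that the hypotheses of Theorem 14.1.4 of \cite{loring} apply to our setting—namely that both the generating set and the relation set are finite, which holds because $T$ is a finite tree—so that the clean equivalence between stability and semiprojectivity may be invoked verbatim.
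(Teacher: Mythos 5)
Your proposal is correct and follows the paper's argument exactly: the paper likewise combines the identification of $C(T)$ as the universal algebra for $\mathcal R(E(T))$ (Lemma 3.2.2 of \cite{loring}, as in Proposition~\ref{liftable}), the equivalence of stability with semiprojectivity of the universal algebra (Theorem 14.1.4 of \cite{loring}), and the fact that projectivity of $C(T)$ (Corollary~\ref{C:ftree}) implies semiprojectivity. No gaps; your added remark about finiteness of the generating and relation sets is a reasonable point of care that the paper leaves implicit.
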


\section{Topological preliminaries}

The following proposition might be well-known.

\begin{pro}\label{Peano circle}
Let $X$ be a Peano continuum of dimension $>1$. Then $X$ contains a
topological copy of the circle $S^1$.
\end{pro}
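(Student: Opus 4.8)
The plan is to prove the contrapositive: if a Peano continuum $X$ contains no topological copy of $S^1$, then $\dim X \le 1$. Recall first that a Peano continuum, being a locally connected metric continuum, is arcwise connected and locally arcwise connected; this is classical, following from the Hahn--Mazurkiewicz theorem together with the Mazurkiewicz--Moore arcwise-connectivity theorem. Thus any two distinct points of $X$ are joined by an arc.

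First I would establish that $X$ is \emph{uniquely} arcwise connected. Suppose not: then some pair of points $a,b$ is joined by two distinct arcs $\alpha$ and $\beta$. Parametrizing both arcs on $[0,1]$ from $a$ to $b$ and comparing them, one extracts a subloop --- the first parameter value at which the two arcs separate and the next value at which they meet again bound two arcs with common endpoints and otherwise disjoint interiors, whose union is a simple closed curve. This is the standard fact that the union of two distinct arcs with the same endpoints contains a copy of $S^1$, and it contradicts our assumption. Hence $X$ is uniquely arcwise connected, and a locally connected continuum that is uniquely arcwise connected is by definition a \emph{dendrite}.

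It then remains to invoke the structure theory of dendrites to conclude $\dim X \le 1$. Every dendrite is a $1$-dimensional continuum: one may write the dendrite as the closure of an increasing union of finite trees $T_{1}\subset T_{2}\subset\cdots$ admitting retractions onto each $T_{n}$, and combine the $1$-dimensionality of the finite trees (Corollary~\ref{C:ftree} already records that these are the relevant building blocks) with a limiting and countable-sum argument; equivalently, unique arcwise connectivity forces every point to possess arbitrarily small connected open neighborhoods whose boundaries are totally disconnected, which is precisely the inductive condition $\operatorname{ind} X \le 1$. Either route yields $\dim X \le 1$, establishing the contrapositive and hence the proposition.

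I expect the main obstacle to be this last step, the dimension bound $\dim(\text{dendrite})\le 1$. The arc-theoretic reduction to dendrites is routine, but converting the absence of a circle into a totally disconnected boundary for a basis of neighborhoods requires the genuine content of dendrite theory: control of the Menger--Urysohn order of points, the countability of branch points, and a countable-sum argument for dimension. In a self-contained write-up this is where the care is needed; otherwise it can be cited directly from classical continuum theory (Whyburn, Kuratowski, Nadler).
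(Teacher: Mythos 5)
Your proposal is correct, but it takes a genuinely different route from the paper's proof. You argue through dendrite theory: since a Peano continuum is arcwise connected (Hahn--Mazurkiewicz), the absence of a circle forces unique arcwise connectivity (two distinct arcs with common endpoints contain a simple closed curve, by the standard first-exit/last-entry argument), so $X$ is a dendrite, and you then cite the classical theorem (Whyburn, Kuratowski, Nadler) that dendrites are at most one-dimensional. The paper instead works by contradiction using metric geometry: by Borsuk's theorem $X$ admits a geodesic (convex) metric; absence of circles makes geodesics unique and forces every piecewise geodesic path to contain the geodesic segment between its endpoints; since $\operatorname{ind} X>1$, some metric sphere $S_r(x_0)$ has positive dimension and hence contains a nondegenerate continuum $C$; chaining points $z_0,\dots,z_k$ of $C$ with $d(z_i,z_{i+1})$ small yields a piecewise geodesic path staying near the sphere, which therefore misses the branch point $z$ of the geodesics $[y_0,x_0]$ and $[y_1,x_0]$, contradicting the fact that this path must contain $[y_0,y_1]\ni z$. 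What each approach buys: yours is shorter and ties directly into the paper's dendrite theme, but it outsources the essential content (one-dimensionality of dendrites, via the Menger--Urysohn order bound or an inverse limit of finite trees) to the classical literature; the paper's argument is self-contained modulo Borsuk's convex-metric theorem and needs no structure theory of dendrites. Two small points of care in your write-up: the extraction of a circle from two distinct arcs should be phrased via the sup/inf of parameter values where one arc meets the other (the coincidence set can be messy under arbitrary parametrizations), and ``totally disconnected boundary'' bounds $\operatorname{ind}$ only because the boundaries in question are compact, hence zero-dimensional.
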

\begin{proof}
We present a proof based on Borsuk's theorem which states that every
Peano continuum $X$ admits a geodesic metric $d$. It means that for
every pair of points $x,x'\in X$ there is an isometric imbedding of
the interval $\xi:[0,a]\to X$ with $a=d(x,x')$, $\xi(0)=x$, and
$\xi(a)=x'$. The image $\xi([0,a])$ is called a {\it geodesic
segment} between $x$ and $x'$ and is denoted by $[x,x']$.

Assume that $X$ does not contain a circle and $\dim X>1$. The first
condition implies that for every two pints $x,x'\in X$ there is a
unique geodesic joining them. Moreover, every piece-wise geodesic
path between $x$ and $x'$ contains the geodesic segment $[x,x']$.

Since $ind X>1$, there is $x_0\in X$ and $r>0$ such that
$\dim\partial S_r(x_0)>0$ where $S_r(x_0)=\{x\in X\mid d(x,x_0)=r\}$
is the sphere of radius $r$ centered at $x_0$. Then $S_r(x_0)$
contains a continuum $C$. Let $y_0,y_1\in C$ and let
$z\in[y_0,x_0]\cap[y_1,x_0]$ be the point with the maximum
$d(x_0,z)$. We denote by $I=[y_0,z]\cup[z,y_1]$. Thus,
$I=[y_0,y_1]$. Let $\epsilon=r-d(x_0,z)$. We consider a finite cover
of $C$ by $\epsilon/4$-balls. Since $C$ is a continuum, the nerve of
this cover is connected. Therefore, there is a finite sequence
$z_0,z_1,\dots, z_k\in C$ such that $z_0=y_0$, $z_k=y_1$, and
$d(z_i,z_{i=1})<\epsilon$. Clearly, $z\notin[z_i,z_{i+1}]$ for every
$i$. This contradicts to the fact that a piece-wise geodesic path
$[z_0,z_1]\cup[z_1,z_2]\cup\dots\cup[z_{k-1},z_k]$ contains $I$.
\end{proof}

\begin{pro}\label{AR circle} Let $X\in AR$ be a compact Hausdorff space of
dimension $>1$. Then $X$ contains a topological copy of the circle
$S^1$.
\end{pro}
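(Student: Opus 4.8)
The plan is to leverage Proposition~\ref{Peano circle}, which already gives the conclusion for Peano continua of dimension $>1$. So the whole game is to reduce from a general compact AR of dimension $>1$ to that Peano case. The natural bridge is the classical fact that every compact metrizable AR is a Peano continuum: ARs are locally connected and connected, and a compact, connected, locally connected metrizable space is precisely a Peano continuum (by the Hahn--Mazurkiewicz characterization, or directly from local connectedness plus compactness). Once metrizability is in hand, I would simply invoke Proposition~\ref{Peano circle} to extract the circle.

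The first step, then, is to dispose of metrizability, since the statement only assumes $X$ is compact Hausdorff. Here I would argue that an AR of dimension $>1$ must contain a compact \emph{metrizable} AR of dimension $>1$, and apply the reduction above to that subspace. The route I would take is through inverse systems: any compact Hausdorff space is the limit of an inverse system of compact metric spaces, and for an AR one can arrange the bonding maps and the limit so that the relevant metrizable factor is itself an AR (ARs are the retracts of Tychonoff cubes, and one controls dimension via the Freudenthal-type/spectral theorem for inverse systems). The key point to secure is that if $\dim X > 1$ then some metrizable approximation $X_\alpha$ also has $\dim X_\alpha > 1$, which follows because covering dimension is detected by finite open covers and hence is preserved in passing to a suitable metrizable quotient in the spectral decomposition. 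Having found a metrizable compact AR $Z \subseteq X$ (or a retract $Z$) with $\dim Z > 1$, I would then apply the Peano case.

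I expect the main obstacle to be precisely this non-metrizable-to-metrizable reduction: one must ensure that the metrizable approximation inherits both the AR property and dimension $>1$ simultaneously. Preserving dimension down an inverse spectrum is routine, but guaranteeing that the metrizable piece is genuinely an AR (and not merely a retract that loses the dimension bound) requires the spectral/factorization theorem for AR-valued maps. An alternative that sidesteps inverse systems is to use that every compact AR is an absolute retract for the class of compact spaces, embed $X$ in a Tychonoff cube $[0,1]^\tau$ with a retraction $r\colon [0,1]^\tau \to X$, and then restrict attention to a suitable metrizable coordinate subcube $[0,1]^{\aleph_0}$ on which $X$ projects with dimension $>1$; the image is a metrizable AR, and Proposition~\ref{Peano circle} finishes the argument.

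Thus the skeleton is: (1) realize $X$ as a retract of a Tychonoff cube, or as a limit of a metric inverse spectrum of ARs; (2) produce a compact metrizable AR $Z$ with $\dim Z > 1$; (3) observe $Z$ is a Peano continuum; (4) invoke Proposition~\ref{Peano circle} to find $S^1 \subseteq Z \subseteq X$. The delicate step is (2), and I would lean on the spectral theorem for $C^*$-algebras/compacta to carry it out cleanly.
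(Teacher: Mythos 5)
Your overall strategy --- reduce to a metrizable AR compactum of dimension $>1$ and then invoke Proposition~\ref{Peano circle} --- is the same as the paper's, but your write-up has a genuine gap at the decisive step. Every device you propose (inverse spectra of metric compacta, projecting $X \subseteq [0,1]^{\tau}$ to a countable subcube) produces the metrizable AR $X_{\alpha}$ of dimension $>1$ as a \emph{quotient} (continuous image) of $X$, not as a subspace. Your step (4) nevertheless concludes ``$S^1 \subseteq Z \subseteq X$'', silently converting that quotient into a subspace. Nothing in your argument justifies this: an onto map of compacta need not admit a section, and a circle in the image of a map need not be the image of any circle in the source, so the circle supplied by Proposition~\ref{Peano circle} lives in $X_{\alpha}$ and you have no means of moving it into $X$.

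What closes the gap --- and what the paper actually uses --- is Shchepin's theorem on the adequate correspondence between compact ARs and \emph{soft} maps: for a compact Hausdorff AR $X$ one can choose the projection $p \colon X \to X_{\alpha}$ onto a metrizable AR compactum of the same dimension to be soft. Softness is exactly a lifting property, and it finishes the proof in one line: the embedding $S^1 \hookrightarrow X_{\alpha}$ lifts through $p$ to a map $S^1 \to X$, which is injective because its composition with $p$ is injective, hence (compact to Hausdorff) a topological embedding of the circle into $X$. Equivalently, softness yields a section $s \colon X_{\alpha} \to X$ of $p$, exhibiting $X_{\alpha}$ as a retract, hence a subspace, of $X$ --- which is the only way to make your parenthetical ``(or a retract $Z$)'' honest. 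Note also that the delicate point is not quite the one you flag (getting the AR property and dimension $>1$ simultaneously in the approximation --- Shchepin's theorem handles that), but the transfer of the circle back to $X$; without softness of the projection your argument stops one step short of the conclusion.
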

\begin{proof} Scepin's theorem about the adequate correspondence
between compact ARs and soft maps \cite{shchepin},\cite{chi-book}
allows to reduce the problem to the case when $X$ is metrizable AR
compactum. Indeed, by Schepin's theorem there is a soft map $p:X\to
X_{\alpha}$ onto a metrizable AR compactum $X_{\alpha}$ of the same
dimension. We take a topological circle $S^1\subset X_{\alpha}$ and
lift it to $X$. The possibility of lifting is a part of the
definition of soft maps.
\end{proof}
REMARK. The Proposition~\ref{AR circle} holds true for compact
Hausdorff AE(1) compacta. In this case one should apply the adequate
correspondence theorem from \cite{dran} (see  also \cite{chi-book}).
We recall that $AE(n)$ stands for absolute extensors for the class
of $n$-dimensional spaces, i.e., such spaces $Y$ that every
extension problem has a solution in case $\dim X\le n$.

\section{The main theorem}
For a compact space $X$ and a point $x\in X$ we denote by
$C_x(X)=C_0(X\setminus\{x\})$ the $C^*$-algebra of a locally compact
space $X\setminus\{x\}$.

Let $T'=T\cup I$ be a tree obtained from a tree $T$ by attaching an
edge $I=[v,w]$ to a vertex. We identify $C(T)$ and $C(I)$ with the
subalgebras of $C(T')$ by means of corresponding collapses.

\begin{pro}\label{rel lift}  Let $\pi:B\to A$
be a surjection of unital $C^*$-algebras and let $\phi:C(T')\to A$
be a $C^*$-morphism. Then for any lift $\xi:C(T)\to B$ of
$\phi|_{C(T)}$ and any $\epsilon>0$ there is a lift $\xi':C(T')\to
B$ of $\phi $ such that $\|\xi(h_e)-\xi'(h_e)\| <\epsilon$ where
$\{h_e\}_{e\in E(T)}$ is the standard basis of $C(T)$.
\end{pro}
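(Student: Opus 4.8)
The plan is to translate everything into the language of representations of the relation set $\mathcal R(E(T'))$ and then to invoke the stability of this relation set (Proposition \ref{stab}), which is exactly tailored to produce the norm estimate $\|\xi(h_e)-\xi'(h_e)\|<\epsilon$ in the conclusion. Write $a_e=\phi(h_e)\in A$ for $e\in E(T')$ and $\beta_e=\xi(h_e)\in B$ for $e\in E(T)$. Since $\phi$ and $\xi$ are $*$-homomorphisms out of commutative algebras and $\{h_e\}$ satisfies $\mathcal R$, the family $\{a_e\}_{e\in E(T')}$ is an exact representation of $\mathcal R(E(T'))$ in $A$, the family $\{\beta_e\}_{e\in E(T)}$ is an exact representation of $\mathcal R(E(T))$ in $B$ lifting it, and the $\beta_e$ pairwise commute and are positive contractions. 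The algebra $C(T')$ differs from $C(T)$ by the single new edge $I$, so there is exactly one new generator $h_I$, and the only new relations are $0\le h_I\le 1$, the relation $(h_{e}-1)h_I=0$ for the edges $e\le I$ (the ancestors of $I$, if any), and $h_eh_I=0$ for the edges $e$ incomparable to $I$. Producing $\xi'$ amounts to producing an exact representation $\{\beta_e'\}_{e\in E(T')}$ of $\mathcal R(E(T'))$ in $B$ with $\pi(\beta_e')=a_e$ for all $e$ and $\|\beta_e'-\beta_e\|<\epsilon$ for $e\in E(T)$; the morphism $\xi'$ is then recovered from the universality of $C(T')$ for $\mathcal R(E(T'))$ established in Proposition \ref{liftable}.

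First I would build a single element $y\in B$ that lifts $a_I$ as a positive contraction; this is possible because $C_0((0,1])$ is projective in $\mathcal C$ (Example \ref{E:main}(e)), so the positive contraction $a_I$ lifts to a positive contraction. Keeping the old entries $\beta_e$ untouched, the family $\{\beta_e\}_{e\in E(T)}\cup\{y\}$ already satisfies every old relation exactly and satisfies $0\le y\le1$ exactly, while each new relation holds only modulo $\ker\pi$: indeed $(\beta_{e}-1)y$ and $\beta_ey$ lie in $\ker\pi$ because their images under $\pi$ are the corresponding relations evaluated on the exact representation $\{a_e\}$, hence zero. To upgrade this to an honest $\delta$-representation I would use a quasicentral approximate unit $(u_\lambda)$ for the ideal $\ker\pi\subseteq B$ and replace $y$ by $y_\lambda=(1-u_\lambda)^{1/2}\,y\,(1-u_\lambda)^{1/2}$. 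Then $y_\lambda$ is again a positive contraction with $\pi(y_\lambda)=a_I$, and for each defect $z\in\ker\pi$ above one has, by quasicentrality, $\beta_e y_\lambda\approx(1-u_\lambda)^{1/2}(\beta_e y)(1-u_\lambda)^{1/2}$ and similarly with $\beta_e-1$ in place of $\beta_e$; since $(1-u_\lambda)^{1/2}z(1-u_\lambda)^{1/2}\to0$ for every $z\in\ker\pi$, all the new relation values tend to $0$. As there are only finitely many new relations, for $\lambda$ large enough $\{\beta_e\}_{e\in E(T)}\cup\{y_\lambda\}$ is a $\delta$-representation of $\mathcal R(E(T'))$ in $B$ whose image under $\pi$ is the exact representation $\{a_e\}$ in $A$.

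It now remains to invoke Proposition \ref{stab}: given $\epsilon>0$ choose the corresponding $\delta>0$ for the stable relation set $\mathcal R(E(T'))$, and apply it to the $\delta$-representation just constructed. This yields an exact representation $\{\beta_e'\}_{e\in E(T')}$ of $\mathcal R(E(T'))$ in $B$ with $\pi(\beta_e')=\pi(\beta_e)=a_e$ for $e\in E(T)$, with $\pi(\beta_I')=\pi(y_\lambda)=a_I$, and with $\|\beta_e'-\beta_e\|<\epsilon$ for $e\in E(T)$. By the universality of $C(T')$ (Proposition \ref{liftable}) this family defines a morphism $\xi'\colon C(T')\to B$ with $\xi'(h_e)=\beta_e'$; since $\pi\circ\xi'$ and $\phi$ agree on the generating set $\{h_e\}_{e\in E(T')}$ they coincide, so $\xi'$ is a lift of $\phi$, and $\|\xi(h_e)-\xi'(h_e)\|=\|\beta_e-\beta_e'\|<\epsilon$, as required.

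The hard part is the middle step, namely turning the crude lift $y$ of $a_I$ into a genuine $\delta$-approximate representation while preserving the exact values $\pi(\beta_e)=a_e$ on the old edges. One cannot simply cut $y$ down into the hereditary subalgebra singled out by the new relations (where the $\beta_e$ with $e\le I$ act as the identity and those $\beta_e$ incomparable to $I$ are annihilated), since that would in general alter $\pi(y)$; the quasicentral approximate unit performs the correction \emph{inside} $\ker\pi$, leaving all images intact. The commutativity of the $\beta_e$ is what lets the ancestor and incomparability relations beyond the immediate neighbors of the attaching vertex $v$ follow automatically from the relations already satisfied in $\mathcal R(E(T))$, so that only finitely many local defects must be driven to zero. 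Once the $\delta$-representation is in hand, stability does the rest mechanically and supplies precisely the norm estimate in the statement.
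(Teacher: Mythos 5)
Your argument is correct, and its skeleton is the same as the paper's: lift the one new generator crudely, correct the lift so that together with the untouched $\xi(h_e)$ it becomes a $\delta$-representation of $\mathcal R(E(T'))$ that $\pi$ maps onto the exact representation coming from $\phi$, then finish mechanically with stability (Proposition \ref{stab}) and universality (Proposition \ref{liftable}). Where you genuinely diverge is in the correction step, which is the heart of the proof. The paper stays inside the commutative algebra: it collapses a small ball around the attaching vertex $v$ by a map $q\colon T\to T$, so that $w_e=q^*(h_e)$ satisfies $\|w_e-h_e\|<\delta$, picks a bump function $u\in C_v(T)$ with $gu=g$ for all $g\in q^*(C_v(T))$, and corrects an arbitrary contractive lift $\bar h$ of $\phi(h)$ to $\tilde h=\bar h-\xi(u)\bar h$; the new relations then hold exactly against the $\xi(w_e)$, hence within $\delta$ against the $\xi(h_e)$. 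You instead correct inside the ideal: the defects $(\beta_e-1)y$ and $\beta_e y$ lie in $\ker\pi$, and cutting $y$ down by $(1-u_\lambda)^{1/2}$ for a quasicentral approximate unit of $\ker\pi$ kills them without changing anything modulo $\ker\pi$. Each route buys something. Yours imports Arveson's existence theorem for quasicentral approximate units (plus Example \ref{E:main}(e) to lift $\phi(h)$ positively), but it is cleaner on a point where the paper is actually careless: $(1-\xi(u))\bar h$ need not be self-adjoint, let alone positive, since $\xi(u)$ and $\bar h$ need not commute, so the paper's claim that the inequality part of the relations holds really requires a symmetrized correction of the form $(1-\xi(u))^{1/2}\bar h\,(1-\xi(u))^{1/2}$ --- exactly the shape of your cut-down, which keeps $y_\lambda$ a positive contraction for free. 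The paper's construction, in exchange, is elementary and self-contained, using only Lipschitz estimates for the $h_e$ on the tree and no approximate-unit machinery. One small point: your closing remark that commutativity of the $\beta_e$ makes the relations for edges far from $v$ follow automatically is not what your proof uses (you handle every new relation uniformly via quasicentrality) and can simply be deleted.
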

\begin{proof} Let $\xi:C(T)\to B$ and $\epsilon>0$ be given.
Since the relations $\mathcal R(E(T))$ are stable there is
$\delta>0$ that serves $\epsilon$.  Consider the closed
$\delta$-ball $B_{\delta}(v)$ in $T$ with respect to the graph
metric on $T$. Let $q:T\to T$ be a map that collapses the ball
$B_{\delta/2}(v)$ fixes $T\setminus B_{\delta}(v)$ and linearly
extends to $B_{\delta}(v)\setminus B_{\delta/2}(v)$. Let
$w_e=q^*(h_e)$, $e\in E(T)$. Then $\|w_e-h_e\|<\delta$ in $C(T')$
and hence $\|\xi(w_e)-\xi(h_e)\|<\delta$ in $B$.

Let $u\in C_{v}(T)$, $0\le u\le 1$, be such that $gu=g$ for every
$g\in q^*(C_{v}(T))$. Let $h$ denote the generator of
$C_{0}((v,w])\subset C(T')$ . Let $\bar h\in B$ be an arbitrary lift
of $h$ with $\|\bar h\|\le 1$. We define $\tilde h=\bar h-\xi(u)\bar
h$. Note that $\|\tilde h\|\le\|\bar h\|\|1-u\|\le 1$. For every
$g\in q^*C_v(T)$ we have
$$\xi(g)\tilde h= \xi(g)(\bar
h-\xi(u)\bar h)=\xi(g)\bar h-\xi(gu)\bar h= \xi(g)\bar h-\xi(g)\bar
h=0.$$ We show that $\{\xi(h_e)\}_{e\in E(T)}\cup \{\tilde h\}$ is a
$\delta$-representation in $B$ of the relations $\mathcal R(E(T'))$.
First, we note the inequality part of relations holds true. Also the
relations that do not involve $I$  holds true. If $e\le I$ then
$h_e-1\in C_v(T)$ and hence $(\xi(w_e)-1)\tilde h= 0$. Hence
$\|(\xi(h_e)-1)\tilde h\|=$$$\|(\xi(h_e)-1)\tilde
h-(\xi(w_e)-1)\tilde h\|= \|(\xi(h_e)-\xi(w_e))\tilde
h\|\le\|\xi(h_e)-\xi(w_e)\|<\delta.$$ If $e$ and $I$ are not
comparable, then $\xi(w_e)\tilde v= 0$ and similarly,
$\|(\xi(h_e)\tilde h\|<\delta$ .

In view of stability (Proposition~\ref{stab}) there is a
presentation $(y_e)_{e\in E(T)}\cup\{y_I\}$ in $B$ of the relations
$\mathcal R(E(T'))$ with $\pi(y_e)=\phi(h_e)$, $\pi(e_I)=h$,
$\|y_e-\xi(h_e)\|<\epsilon$, $e\in E(T)$, and $\|y_I-\tilde
h\|<\epsilon$. We define $\xi':C(T_k)\to B$ by setting
$\xi'(h_e)=y_e$, $e\in E(T')$.
\end{proof}

\begin{pro}\label{partition}
Let a tree $T'$ be obtained from a tree $T$ by adding an extra
vertex in the middle of an edge $e\in E(T)$. Thus $e=e_-\cup e_+$.
Let $\xi,\psi:C(T)\to A$ be such that $\|\xi(h)-\psi(h)\|<\epsilon$
for all elements of the new standard basis $\{h_b\}_{b\in E(T')}$.
Then the inequality $\|\xi(h)-\psi(h)\|<\epsilon$ for all elements
of the old standard basis $\{h_a\}_{a\in E(T)}$.
\end{pro}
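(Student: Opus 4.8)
The plan is to reduce the whole statement to a single elementary identity expressing the old generator $h_e$ as a convex combination of the two new ones $h_{e_-}$ and $h_{e_+}$. First I would note that, as topological spaces, $T'=T$: inserting a vertex in the interior of an edge does not change the underlying space. Hence $C(T')$ and $C(T)$ are literally the same $C^*$-algebra, and both standard bases are honest elements of the common domain of $\xi$ and $\psi$, so it makes sense to feed either basis into both maps.

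Next I would dispose of the edges that are untouched by the subdivision. Every edge $a\in E(T)$ with $a\neq e$ is again an edge of $T'$, and the distance function defining $h_a$ does not see the new vertex; thus the old generator $h_a$ coincides with the corresponding element of the new standard basis $\{h_b\}_{b\in E(T')}$. For all such $a$ the inequality $\|\xi(h_a)-\psi(h_a)\|<\epsilon$ is therefore exactly part of the hypothesis, and nothing needs to be proved.

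The only generator requiring separate treatment is $h_e$. Writing the edge $e$ in its normalized arc-length coordinate $t\in[0,1]$ (so that $h_e=t$) and placing the inserted vertex at $t=c$, the two new generators are $h_{e_-}=\min(t/c,1)$ and $h_{e_+}=\max\bigl((t-c)/(1-c),0\bigr)$, where the extensions off $e$ are dictated by the collapse defining the standard basis. A direct check on each of the intervals $[0,c]$ and $[c,1]$ yields the key identity
$$h_e=c\,h_{e_-}+(1-c)\,h_{e_+},$$
a convex combination (for the midpoint, simply $h_e=\tfrac12(h_{e_-}+h_{e_+})$). Since $\xi$ and $\psi$ are $\ast$-homomorphisms and hence linear, applying them and subtracting gives $\xi(h_e)-\psi(h_e)=c\bigl(\xi(h_{e_-})-\psi(h_{e_-})\bigr)+(1-c)\bigl(\xi(h_{e_+})-\psi(h_{e_+})\bigr)$, so by the triangle inequality and the hypothesis on $h_{e_-},h_{e_+}$ we obtain $\|\xi(h_e)-\psi(h_e)\|<c\epsilon+(1-c)\epsilon=\epsilon$.

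The only point demanding any thought is the convex-combination identity itself, and once the distance functions are written out explicitly it is immediate; there is no genuine obstacle. The proposition is in essence a bookkeeping lemma guaranteeing that closeness measured on the finer standard basis $\{h_b\}_{b\in E(T')}$ propagates to the coarser basis $\{h_a\}_{a\in E(T)}$, the point being that the coarser generators lie in the (convex hull of the) algebra generated by the finer ones.
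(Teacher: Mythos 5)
Your proof is correct and rests on exactly the same observation as the paper's: the convex-combination identity $h_e=\tfrac12\bigl(h_{e_-}+h_{e_+}\bigr)$ (which you state in the slightly more general form $h_e=c\,h_{e_-}+(1-c)\,h_{e_+}$), combined with linearity of the homomorphisms and the triangle inequality. The paper's proof is just this one line, so your additional checks (that the untouched edges' generators are unchanged, and the coordinate verification of the identity) simply fill in details the authors left implicit.
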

\begin{proof}

Since $h_e=\frac{1}{2}(h_{e_-}+h_{e_+})$ in $C(T)$, the result follows.
\end{proof}

\begin{thm}
The following conditions are equivalent for a compact space $X$:
\begin{enumerate}
\item
$C(X)$ is projective in ${\mathcal C}^{1}$;
\item
$X$ is an absolute retract and $\dim X \leq 1$.
\end{enumerate}
\end{thm}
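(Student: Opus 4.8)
The plan is to prove the equivalence by establishing both implications, with the bulk of the work in the direction $(2)\Rightarrow(1)$. The implication $(1)\Rightarrow(2)$ is comparatively direct: projectivity of $C(X)$ in $\mathcal{C}^1$ immediately gives projectivity in the abelian subcategory $\mathcal{AC}^1$, so by Example~\ref{E:main}(c) the space $X$ is a compact absolute retract. To bound the dimension, I would argue by contradiction. If $\dim X>1$, then since $X$ is a compact AR, Proposition~\ref{AR circle} provides a topological copy of the circle $S^1\subset X$. I would then leverage the obstruction behind Example~\ref{E:main}(d): the Toeplitz extension $0\to\mathbb{K}\to C^*(u)\xrightarrow{\pi}C(S^1)\to 0$ supplies a $\ast$-homomorphism into $C(S^1)$ that cannot be lifted to the Toeplitz algebra. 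Using the retraction $X\to S^1$ (available because $X$ is an AR containing $S^1$), one transports this non-liftable morphism to a non-liftable morphism out of $C(X)$, contradicting projectivity. So $\dim X\le 1$.

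For the substantive direction $(2)\Rightarrow(1)$, I assume $X$ is a compact AR with $\dim X\le 1$; such an $X$ is exactly a dendrit. The strategy is to realize $C(X)$ as an inverse limit of the algebras $C(T)$ for finite trees $T$, and to lift morphisms through this limit using the approximation machinery from Section~2. First I would write $X$ as the inverse limit of a sequence of finite trees $T_k$ with bonding maps that are the natural collapses (a standard fact for dendrits/one-dimensional AR compacta), so that $C(X)=\varinjlim C(T_k)$ as a direct limit of $C^*$-algebras, with $C(T_k)\hookrightarrow C(T_{k+1})$ induced by edge subdivisions and edge attachments. Given a surjection $\pi:B\to A$ of unital $C^*$-algebras and a morphism $f:C(X)\to A$, the goal is to build a lift $\tilde f:C(X)\to B$.

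The core of the argument is an inductive construction of compatible lifts $\xi_k:C(T_k)\to B$ of $f|_{C(T_k)}$. Corollary~\ref{C:ftree} gives the base case, since $C(T_0)$ is projective in $\mathcal{C}^1$ for the finite tree $T_0$. For the inductive step I would pass from $T_k$ to $T_{k+1}$ one elementary move at a time: edge attachments are handled by Proposition~\ref{rel lift}, which lifts across an attached edge while controlling $\|\xi(h_e)-\xi'(h_e)\|<\epsilon$ on the old standard basis, and edge subdivisions are controlled by Proposition~\ref{partition}, which propagates such $\epsilon$-closeness from the refined basis back to the coarser one. The crucial point is to choose a summable sequence $\epsilon_k$ so that the lifts $\xi_k$, restricted to any fixed $C(T_j)$, form a Cauchy sequence in the operator norm; their limit then defines a coherent morphism on the dense subalgebra $\bigcup_k C(T_k)$, which extends by continuity to the desired lift $\tilde f:C(X)\to B$.

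I expect the main obstacle to be managing the approximate coherence across infinitely many stages so that the limiting maps genuinely assemble into a single $\ast$-homomorphism rather than a mere asymptotic morphism. Propositions~\ref{rel lift} and~\ref{partition} only guarantee closeness of successive lifts on standard bases, not exact compatibility, so the delicate part is a quantitative bookkeeping of the tolerances $\epsilon_k$ across both elementary moves, ensuring that the accumulated perturbations remain summable and that the stability of the relations $\mathcal{R}(E(T_k))$ (Proposition~\ref{stab}) is invoked with a $\delta$ small enough to survive the limit. Once the Cauchy property on each $C(T_j)$ is secured and the limit respects the algebraic relations, verifying $\pi\circ\tilde f=f$ and that $\tilde f$ is a genuine unital $\ast$-homomorphism is then routine.
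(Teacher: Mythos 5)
Your direction $(1)\Rightarrow(2)$ contains a genuine error. You claim that a retraction $X\to S^1$ is ``available because $X$ is an AR containing $S^1$.'' No such retraction can exist: a compact AR is contractible, and a retract of a contractible space is contractible, so $S^1$ is never a retract of an AR. (Had it existed, your argument would indeed be quick --- $C(S^1)$ would be a retract of $C(X)$ in $\mathcal{C}^1$, hence projective, which it is not --- but the hypothesis is vacuous.) The AR property yields extensions of maps \emph{into} an absolute retract, and the correct argument applies it to the square, not to $X$: since $[0,1]^2$ is an AR, the embedding $j\colon S^1\to[0,1]^2$ extends over $X$ to a map $g\colon X\to[0,1]^2$ with $g\circ i=j$, where $i\colon S^1\hookrightarrow X$ is the inclusion provided by Proposition~\ref{AR circle}. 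Then $C(i)\circ C(g)=C(j)=f$, so if projectivity of $C(X)$ lifted $C(i)$ to $\varphi\colon C(X)\to C^*(u)$, the composition $\varphi\circ C(g)$ would lift $f$ through the Toeplitz extension, contradicting Example~\ref{E:main}(d). As written, your sketch has no valid mechanism for converting the non-liftability of $f$ into a contradiction with the projectivity of $C(X)$; the factorization through $C(g)$ (or some substitute, e.g.\ a Fredholm-index argument showing that no commutative subalgebra of $C^*(u)$ can surject onto $C(S^1)$) is the missing ingredient.

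Your direction $(2)\Rightarrow(1)$ is essentially the paper's proof: write the dendrit $X$ as an inverse limit of finite trees $T_k$, each obtained from the previous by attaching a single edge, so that $C(X)$ is the direct limit of the subalgebras $C(T_k)$; construct lifts $\xi_k$ inductively, using Corollary~\ref{C:ftree} for the base case, Proposition~\ref{rel lift} with summable tolerances $\epsilon_k=2^{-k}$ for edge attachments, and Proposition~\ref{partition} to transfer the estimates across subdivided edges; then pass to the limits $\bar h_e^k=\lim_i\xi_{k+i}(h_e^k)$, which satisfy the relations $\mathcal{R}(E(T_k))$ exactly and are compatible in $k$, hence assemble into the desired morphism. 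You correctly isolated the one delicate point --- the $\xi_k$ are only approximately coherent, and exact coherence is recovered only after taking limits --- which is precisely how the paper's proof concludes. The only cosmetic difference is that the paper reduces to constructing a section of an arbitrary epimorphism $\pi\colon B\to C(X)$, whereas you lift a general morphism $f\colon C(X)\to A$; since Proposition~\ref{rel lift} is stated in that generality, both formulations run identically.
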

\begin{proof}
$(1) \Longrightarrow (2)$. If $C(X)$ is projective in ${\mathcal
C}^{1}$ then it is projective in the smaller category ${\mathcal
A}{\mathcal C}^{1}$. By the Gelfand duality, the latter is
equivalent to $X$ being an absolute retract. In order to prove that
$\dim X \leq 1$, assume the contrary, i.e. suppose that $\dim X >
1$. Then by Proposition~\ref{AR circle} $X$ contains a topological
copy of the circle $S^{1}$. Let $i \colon S^{1}\hookrightarrow  X$
denote the corresponding embedding.

By the Gelfand duality the $\ast$-homomorphism $f \colon C([0,1]^{2}) \to C(S^{1})$
(see the proof of Example \ref{E:main}(d)) is of the form $f = C(j)$ for
embedding map $j \colon S^{1} \to [0,1]^{2}$. Since $[0,1]^{2}$ an absolute
retract there exists a map $g \colon X \to [0,1]^{2}$ such that $g \circ i = j$.
This implies that $C(i) \circ C(g) = C(j) = f$. In other words the following
diagram of unbroken arrows

\bigskip

\[
        \xymatrix{
         & & C^{\ast}(u) \ar^{\pi}[d]\\
          C([0,1]^{2}) \ar^{C(j)=f}[rr] \ar_{C(g)}[dr]& & C(S^{1})   \\
         & C(X) \ar_{C(i)}[ur] \ar@{-->}^(.7){\varphi}[uur] & \\
        }
      \]

\bigskip

\noindent commutes. Since $C(X)$ is projective in ${\mathcal C}^{1}$, the
${\ast}$-homomorphism $C(i)$ can be lifted to a $\ast$-homomorphism
(the dotted arrow in the above diagram) $\varphi \colon C(X) \to C^{\ast}(u)$.
Then
\[ \pi\circ \left(\varphi \circ C(g)\right) = \left( \pi \circ \varphi\right)
\circ C(g) = C(i) \circ C(g) = C(j) = f \]

\noindent which shows that the $\ast$-homomorphism $f$ also has a lifting
contradicting our choice. Consequently $\dim X \leq 1$.

$(2) \Longrightarrow (1)$. Let $X$ be a dendrit. Thus, $X$ is the
inverse limit of finite trees $T_k$ with bonding maps
$r_k:T_{k+1}\to T_k$ be the retraction which takes $I_{k}$ to the
attaching point $x_k=T_k\cap I_k$, $T_{k+1}=T_k\cup I_{k}$,
$T_0=I_0\cong[0,1]$, $I_k=[x_k,y_k]\cong[0,1]$. Let
$\rho_k:T_{k+1}\to I_k$ be the retraction which takes $T_{k}$ to the
point $x_k$. Let $C=C(X)$, $C_k=C(X_k)$ and $A_k=C(I_k)$. The maps
$r_k$ and $\rho_k$ induce imbeddings $r_k^*$ of $C_k$ and $\rho_k^*$
of $A_k$ into $C_{k+1}$. Let $h_k\in C_{0}((x_k,y_k])\cong
C_0((0,1])$ be the generator. The image of $h_k$ under this
imbedding (as well as under composition imbeddings
$r^*_{k+l}\circ\dots\circ r^*_{k+1}\circ r_k^*$) will be denoted by
the same symbol $h_k$.

Thus, $C=\lim_{\rightarrow}\{C_k,r_k^*\}$ is the direct limit. Since
all bonding maps are imbeddings, we regard $C_k$ as a subalgebra of
$C$ for all $k$. Let $\pi:B\to C$ be an epimorphism. We define
sections $\psi_k:C_k\to B$ for all $k$ such that
$\psi_{k+1}|_{C_k}=\psi_k$. Then the direct limt of $\psi_k$ will
define a required section.

By induction on $k$ we construct sections $\xi_k:C_k\to W$. Since
$C(T_0)$ is projective, there is a section $\xi_0$. Assume that
$\xi_k$ is constructed. We construct $\xi_{k+1}$ using
Propostion~\ref{rel lift} with $\epsilon=1/2^k$.

Let $\{h_e^k\}_{e\in E(T_k)}$ be the standard basis for $C_k$
defined by the rooted tree structure on $T_k$ with the root
$0\in[0,1]=I_0=T_0$. Fix $e\in E(T_k)$. By induction on $i$ in view
of Proposition~\ref{partition} and Proposition~\ref{rel lift} we
obtain $\|\xi_{k+i}(h_e^k)-\xi_{k+i-1}(h_e^k)\|\le 1/2^{k+i}$ for
every $i\in\mathbb N$.  Therefore for every $k$ and $e^k\in E(T_k)$
there is a limit $$\lim_{i\to\infty}\xi_{k+i}(h_e^k)=\bar h_e^k.$$
We define $\psi_k(h_e^k)=\bar h_e^k$. This defines a presentation of
the relation set $\mathcal R(E(T_k))$ in $B$ and hence a
homomorphism of $C^*$-algebras $\psi_k:C_k\to B$. Note that $\psi_k$
is a lift. Also note that $\psi_{k+1}(h_e^k)=\bar
h_e^k=\psi_k(h_e^k)$ if $e\in E(T_{k+1})$. If $e\notin E(T_{k+1})$, it
means that $e=e_-\cup e_+$ in $T_{k+1}$ and
$h_e^k=\frac{1}{2}(h_{e_-}^{k+1}+h_{e_+}^{k+1})$ (see
Proposition~\ref{partition}). Then
$$\psi_{k+1}(h_e^k)=
\frac{1}{2}\psi_{k+1}(h_{e_-}^{k+1})+\frac{1}{2}\psi_{k+1}(h_{e_+}^{k+1})=
\frac{1}{2}\lim_{i\to\infty}\xi_{k+i}(h_{e_-}^{k+1})$$$$+
\frac{1}{2}\lim_{i\to\infty}\xi_{k+i}(h_{e_+}^{k+1})=
\lim_{i\to\infty}\xi_{k+i}(\frac{1}{2}(h_{e_-}^{k+1}+h_{e_+}^{k+1}))=
\lim_{i\to\infty}\xi_{k+i}(h_e^k)=\psi_k(h_e^k).$$ Thus,
$\psi_{k+1}(g)=\psi_k(g)$ for all $g\in C_k$.
\end{proof}


\providecommand{\bysame}{\leavevmode\hbox to3em{\hrulefill}\thinspace}



\begin{thebibliography}{11}

\bibitem{bla}
B.~Blackadar, {\em Shape theory for $C^{\ast}$-algebras}, Math. Scand. {\bf 56} (1985), 249--275.

\bibitem{chi991}
A.~Chigogidze, {\em Uncountable direct systems and a
characterization of
non-separable projective $C^{\ast}$-algebras}, Mat. Stud.
{\bf 12}, \# 2 (1999), 171--204.

\bibitem{chi-book}
A.~Chigogidze, {\em Inverse Spectra}, North-Holland, 1996.



\bibitem{cob}
L.~A.~Coburm, {\em The $C^{\ast}$-algebra generated by an isometry I}, Bull. Amer.
Math. Soc. {\bf 73} (1967), 722--726.

\bibitem{dran}
A.~N.~Dranishnikov, {\em Absolute extensors in dimension n and
n-soft dimension increasing mappings}, Russian Math. Surveys {\bf
39:5} (1984), 63--111.


\bibitem{effros}
E.~G.~Effros and J.~Kaminker, {\em Homotopy continuity and shape theory for
$C^{\ast}$-algebras}, Geometric methods in opertaor algebras, U.S.-Japan seminar
at Kyoto 1983, Pitman 1985, 152--180.

\bibitem{loring}
T.~A.~Loring, {\em Lifting Solutions to Perturbing Problems in $C^{\ast}$-algebras},
Fields Institute Monograph Series, Vol. 8, Amer. Math. Soc., Providence, RI (1997).

\bibitem{lope1}
T.~A.~Loring and G.~K.~Pedersen, {\em Corona extendibility and
asymptotic multiplicativity}, K-theory, {\bf 11} (1997), 83--102.

\bibitem{shchepin}
E.~V.~Shchepin, {\em Topology of limit spaces of uncountable inverse
spectra} Russian Math. Surveys, {\bf 31:5} (1976), 191--226.

\end{thebibliography}

\end{document}